\definecolor{labelkey}{rgb}{0,0.08,0.45}
\definecolor{refkey}{rgb}{0,0.6,0.0}
\definecolor{Brown}{rgb}{0.45,0.0,0.05}
\newcommand{\emp}{\ensuremath{\varnothing}}
\newcommand{\scal}[2]{\langle{{#1},{#2}}\rangle}
\newcommand{\RR}{\ensuremath{\mathbb R}}
\newcommand{\RPP}{\ensuremath{\,\left]0,+\infty\right[}}
\newcommand{\RX}{\ensuremath{\,\left]-\infty,+\infty\right]}}
\newcommand{\RXX}{\ensuremath{\,\left[-\infty,+\infty\right]}}
\newcommand{\menge}[2]{\big\{{#1} \mid {#2}\big\}}
\newcommand{\To}{\ensuremath{\rightrightarrows}}
\newcommand{\dom}{\ensuremath{\operatorname{dom}}}
\newcommand{\gra}{\ensuremath{\operatorname{gra}}}
\newcommand{\intdom}{\ensuremath{\operatorname{int}\operatorname{dom}}\,}
\newcommand{\inte}{\ensuremath{\operatorname{int}}}
\newcommand{\pinf}{\ensuremath{+\infty}}
\renewcommand{\phi}{\ensuremath{\varphi}}
\newtheorem{theorem}{Theorem}[section]
\newtheorem{lemma}[theorem]{Lemma}
\newtheorem{fact}[theorem]{Fact}
\theoremstyle{plain}{\theorembodyfont{\rmfamily}
}
\theoremstyle{plain}{\theorembodyfont{\rmfamily}
}
\theoremstyle{plain}{\theorembodyfont{\rmfamily}
}
\theoremstyle{plain}{\theorembodyfont{\rmfamily}
}
\theoremstyle{plain}{\theorembodyfont{\rmfamily}
\newtheorem{remark}[theorem]{Remark}}
\theoremstyle{plain}{\theorembodyfont{\rmfamily}
}
\begin{document}

%\sffamily

\title{\textsc{
On the maximal monotonicity of the sum \\ of a maximal monotone
linear relation\\ and the subdifferential operator \\
of a sublinear function }}

\author{
Heinz H.\ Bauschke\thanks{Mathematics, Irving K.\ Barber School,
UBC Okanagan, Kelowna, British Columbia V1V 1V7, Canada. E-mail:
\texttt{heinz.bauschke@ubc.ca}.},\;  Xianfu
Wang\thanks{Mathematics, Irving K.\ Barber School, UBC Okanagan,
Kelowna, British Columbia V1V 1V7, Canada. E-mail:
\texttt{shawn.wang@ubc.ca}.},\; and Liangjin\
Yao\thanks{Mathematics, Irving K.\ Barber School, UBC Okanagan,
Kelowna, British Columbia V1V 1V7, Canada.
E-mail:  \texttt{ljinyao@interchange.ubc.ca}.}}
 \vskip 3mm

\date{January 1, 2010}
\maketitle

\begin{abstract} \noindent
The most important open problem in Monotone Operator Theory
concerns the maximal monotonicity of the sum of two
maximal monotone operators provided that
Rockafellar's constraint qualification holds.

In this note, we provide a new maximal monotonicity result for the sum
of a maximal monotone relation and the subdifferential operator of
a proper, lower semicontinuous, sublinear function.
The proof relies on Rockafellar's formula for the Fenchel conjugate of the
sum as well as some results on the Fitzpatrick function.
\end{abstract}

\noindent {\bfseries 2000 Mathematics Subject Classification:}\\
{Primary 47A06, 47H05;
Secondary  47B65,
49N15, 52A41, 90C25}

\noindent {\bfseries Keywords:}
Constraint qualification,
convex function,
convex set,
Fenchel conjugate,
Fitzpatrick function,
linear relation,
maximal monotone operator,
multifunction,
monotone operator,
set-valued operator,
 subdifferential operator,
 sublinear function,
Rockafellar's sum theorem.

\section{Introduction}

Throughout this paper, we assume that
$X$ is a real Banach space with norm $\|\cdot\|$,
that $X^*$ is the continuous dual of $X$, and
that $X$ and $X^*$ are paired by $\scal{\cdot}{\cdot}$.
Let $A\colon X\To X^*$
be a \emph{set-valued operator} (also known as multifunction)
from $X$ to $X^*$, i.e., for every $x\in X$, $Ax\subseteq X^*$,
and let
$\gra A = \menge{(x,x^*)\in X\times X^*}{x^*\in Ax}$ be
the \emph{graph} of $A$.
Recall that $A$ is  \emph{monotone} if
\begin{equation}
\big(\forall (x,x^*)\in \gra A\big)\big(\forall (y,y^*)\in\gra
A\big) \quad \scal{x-y}{x^*-y^*}\geq 0,
\end{equation}
and \emph{maximal monotone} if $A$ is monotone and $A$ has no proper monotone extension
(in the sense of graph inclusion).
We say $A$ is a \emph{linear relation} if $\gra A$ is a linear subspace.
Monotone operators have proven to be a key class of objects
in modern Optimization and Analysis; see, e.g.,
the books
\cite{BorVan,BurIus,ButIus,ph,Si,Si2,RockWets,Zalinescu}
and the references therein.
(We also adopt standard notation used in these books:
$\dom A = \menge{x\in X}{Ax\neq\varnothing}$ is the \emph{domain} of $A$.
Given a subset $C$ of $X$,
$\inte C$ is the \emph{interior} of $C$, and
$\overline{C}$ is the \emph{closure} of $C$. We set $C^{\bot}:=
\{x^*\in X^* \mid(\forall c\in C)\, \langle x^*, c\rangle=0\}$
and $S^{\bot}:=
\{x^{**}\in X^{**} \mid(\forall s\in S)\, \langle x^{**}, s\rangle=0\}$ for a set  $S\subseteq X^*$.
The \emph{indicator function} of $C$, written as $\iota_C$, is defined
at $x\in X$ by
\begin{align}
\iota_C (x):=\begin{cases}0,\,&\text{if $x\in C$;}\\
\infty,\,&\text{otherwise}.\end{cases}\end{align}
 Given $f\colon X\to \RX$, we set
$\dom f = f^{-1}(\RR)$ and
$f^*\colon X^*\to\RXX\colon x^*\mapsto\sup_{x\in X}(\scal{x}{x^*}-f(x))$
is the \emph{Fenchel conjugate} of $f$.
If $f$ is convex and $\dom f\neq\varnothing$, then
   $\partial f\colon X\To X^*\colon
   x\mapsto \menge{x^*\in X^*}{(\forall y\in
X)\; \scal{y-x}{x^*} + f(x)\leq f(y)}$
is the \emph{subdifferential operator} of $f$.
Recall that $f$ is \emph{sublinear}
if $f(0)=0$,
$f(x+y)\leq f(x)+f(y)$, and
$f(\lambda x)=\lambda f(x)$ for all $x,y\in \dom f$ and $\lambda>0$.
Finally,  the \emph{closed unit ball} in $X$ is denoted by
$B_X := \menge{x\in X}{\|x\|\leq 1}$.)
Throughout, we shall identify $X$ with its canonical image in the
bidual space $X^{**}$.
Furthermore, $X\times X^*$ and $(X\times X^*)^* = X^*\times X^{**}$ are
likewise paired via $\scal{(x,x^*)}{(y^*,y^{**})} = \scal{x}{y^*} +
\scal{x^*}{y^{**}}$, where $(x,x^*)\in X\times X^*$ and $(y^*,y^{**}) \in
X^*\times X^{**}$.

Let $A$ and $B$ be maximal monotone operators from $X$ to
$X^*$.
Clearly, the \emph{sum operator} $A+B\colon X\To X^*\colon x\mapsto
Ax+Bx = \menge{a^*+b^*}{a^*\in Ax\;\text{and}\;b^*\in Bx}$
is monotone.
Rockafellar's \cite[Theorem~1]{Rock70} guarantees maximal monotonicity
of $A+B$ under
the classical \emph{constraint qualification}
$\dom A \cap\intdom B\neq \varnothing$ when $X$ is reflexive.
The most famous open problem concerns the behaviour in
nonreflexive Banach spaces. See Simons' monograph
\cite{Si2} for a comprehensive account of the recent developments.

Now we focus on the special case when
$A$ is a \emph{linear relation}
and $B$ is the subdifferential operator
of a \emph{sublinear} function $f$. We show that the sum theorem is true
in this setting.
Recently, linear relations have increasingly
been studied in detail; see, e.g.,
\cite{BB,BBW,BWY2,BWY3,BWY4, BWY7, BWY8, PheSim,Svaiter,Voisei06,VZ,Yao}
and Cross' book \cite{Cross} for general background on linear
relations.

The remainder of this paper is organized as follows.
In Section~\ref{s:aux}, we collect auxiliary results for future reference
and for the
reader's convenience.
The main result (Theorem~\ref{t:main}) is proved
in Section~\ref{s:main}.

\section{Auxiliary Results}
\label{s:aux}

\begin{fact}[Rockafellar] \label{f:F4}
\emph{(See {\cite[Theorem~3]{Rock66}},
{\cite[Corollary~10.3 and Theorem~18.1]{Si2}}, or
{\cite[Theorem~2.8.7(iii)]{Zalinescu}}.)}\\
Let $f,g: X\rightarrow\RX$ be proper convex functions.
Assume that there exists a point $x_0\in\dom f \cap \dom g$
such that $g$ is continuous at $x_0$.
Then for every $z^*\in X^*$,
there exists $y^*\in X^*$ such that
\begin{equation}
(f+g)^*(z^*) = f^*(y^*)+g^*(z^*-y^*).
\end{equation} Furthermore, $\partial (f+g)=\partial f+\partial g$.
\end{fact}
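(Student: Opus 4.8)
My plan is to derive both assertions from Fenchel--Rockafellar duality, the essential content being that the constraint qualification (a common point of $\dom f$ and $\dom g$ at which $g$ is continuous) rules out a duality gap and forces attainment. I begin with the conjugate formula. First I would record the trivial half: for any $y^*\in X^*$ and any $x\in X$, the Young--Fenchel inequality gives $f^*(y^*)+g^*(z^*-y^*)\ge\big(\scal{x}{y^*}-f(x)\big)+\big(\scal{x}{z^*-y^*}-g(x)\big)=\scal{x}{z^*}-f(x)-g(x)$, so taking the supremum over $x$ and then the infimum over $y^*$ yields $(f+g)^*(z^*)\le\inf_{y^*\in X^*}\big(f^*(y^*)+g^*(z^*-y^*)\big)$. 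It therefore suffices to exhibit a single $y^*$ achieving the reverse inequality. If $(f+g)^*(z^*)=\pinf$ this is vacuous, so I assume it is finite; after replacing $f$ by $f-\scal{\cdot}{z^*}$, I may assume $z^*=0$, and then $p:=\inf_{x\in X}\big(f(x)+g(x)\big)$ is real (it equals $-(f+g)^*(0)>\minf$, while $p\le f(x_0)+g(x_0)<\pinf$).

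Next I would introduce the value function $h\colon X\to\RXX$, $h(u):=\inf_{x\in X}\big(f(x)+g(x+u)\big)$, which is convex with $h(0)=p$. Since $g$ is continuous at $x_0$, it is bounded above, say by $M$, on some neighbourhood $V$ of $x_0$; hence $h(u)\le f(x_0)+g(x_0+u)\le f(x_0)+M<\pinf$ for every $u$ in the neighbourhood $V-x_0$ of $0$. A convex function that is finite at a point and bounded above on a neighbourhood of that point is continuous there, hence subdifferentiable there, so I may pick $-y^*\in\partial h(0)$. Unravelling $h(u)\ge h(0)-\scal{u}{y^*}$ for all $u$ — i.e.\ $f(x)+g(v)+\scal{v-x}{y^*}\ge p$ for all $x,v$ after the substitution $v=x+u$, then taking the supremum over $x$ and afterwards over $v$ — produces $f^*(y^*)+g^*(-y^*)\le -p=(f+g)^*(0)$, which combined with the trivial inequality gives equality. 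Shifting back by $z^*$ recovers the statement for an arbitrary $z^*$.

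For the subdifferential formula, the inclusion $\partial f(x)+\partial g(x)\subseteq\partial(f+g)(x)$ is immediate from the definition (add the two subgradient inequalities). For the reverse, let $z^*\in\partial(f+g)(x)$; then $x\in\dom f\cap\dom g$ and the Fenchel equality $(f+g)(x)+(f+g)^*(z^*)=\scal{x}{z^*}$ holds with all terms finite. I split $(f+g)^*(z^*)=f^*(y^*)+g^*(z^*-y^*)$ using the conjugate formula just established. Since $\scal{x}{z^*}=\scal{x}{y^*}+\scal{x}{z^*-y^*}$ while $f(x)+f^*(y^*)\ge\scal{x}{y^*}$ and $g(x)+g^*(z^*-y^*)\ge\scal{x}{z^*-y^*}$, and the sum of these two inequalities is an equality, each of them must be an equality. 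Hence $y^*\in\partial f(x)$ and $z^*-y^*\in\partial g(x)$, so $z^*\in\partial f(x)+\partial g(x)$.

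The only delicate step, and the one where the hypothesis is genuinely used, is the passage from the qualitative assumption ``$g$ is continuous at $x_0$'' to the quantitative fact that the value function $h$ is bounded above on a neighbourhood of $0$, followed by the appeal to the standard convex-analysis result that such a function is then continuous — and therefore has a nonempty subdifferential — at that point. Everything else reduces to bookkeeping with the Young--Fenchel inequality.
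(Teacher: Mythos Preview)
The paper does not supply a proof of this Fact; it is simply recorded with citations to Rockafellar, Simons, and Z\u{a}linescu. Your argument is the standard perturbation (value-function) proof of Fenchel--Rockafellar duality and is correct. The reduction to $z^*=0$, the convexity of $h(u)=\inf_x\big(f(x)+g(x+u)\big)$, the upper bound on $h$ near $0$ coming from continuity of $g$ at $x_0$, and the extraction of the attaining $y^*$ from a subgradient of $h$ at $0$ are all clean; the derivation of $\partial(f+g)=\partial f+\partial g$ from the conjugate formula via the Fenchel--Young equality is likewise standard and correct.

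One small technical point you may wish to make explicit: you define $h\colon X\to\RXX$, so a priori $h$ could take the value $-\infty$, and the lemma ``convex, finite at a point, bounded above nearby $\Rightarrow$ continuous and subdifferentiable there'' is normally stated for functions into $\RX$. This is harmless here: since $h(0)=p\in\RR$ and $h\leq f(x_0)+M$ on a ball $\delta B_X$, if $h(u_0)=-\infty$ for some $u_0\in\delta B_X$ then $h(0)\leq\tfrac12 h(u_0)+\tfrac12 h(-u_0)=-\infty$, a contradiction; hence $h$ is proper on $\delta B_X$ and the standard lemma applies. With that sentence added, the proof is complete.
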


\begin{fact}[Fitzpatrick]
\emph{(See {\cite[Corollary~3.9]{Fitz88}}.)}
\label{f:Fitz}
Let $A\colon X\To X^*$ be maximal monotone,  and set
\begin{equation}
F_A\colon X\times X^*\to\RX\colon
(x,x^*)\mapsto \sup_{(a,a^*)\in\gra A}
\big(\scal{x}{a^*}+\scal{a}{x^*}-\scal{a}{a^*}\big),
\end{equation}
which is the \emph{Fitzpatrick function} associated with $A$.
Then for every $(x,x^*)\in X\times X^*$, the inequality
$\scal{x}{x^*}\leq F_A(x,x^*)$ is true,
and equality holds if and only if $(x,x^*)\in\gra A$.
\end{fact}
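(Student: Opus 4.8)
The plan is to reduce everything to one observation about an infimum. First I would record the elementary identity
\begin{equation*}
\scal{x}{a^*}+\scal{a}{x^*}-\scal{a}{a^*}=\scal{x}{x^*}-\scal{x-a}{x^*-a^*},
\end{equation*}
valid for all $(x,x^*),(a,a^*)\in X\times X^*$, which rewrites the Fitzpatrick function as
\begin{equation*}
F_A(x,x^*)=\scal{x}{x^*}-\inf_{(a,a^*)\in\gra A}\scal{x-a}{x^*-a^*}.
\end{equation*}
Here I use the standing convention that a maximal monotone operator has nonempty graph, so the supremum (resp.\ infimum) is taken over a nonempty set. Consequently, the claimed inequality $\scal{x}{x^*}\leq F_A(x,x^*)$ is equivalent to $\inf_{(a,a^*)\in\gra A}\scal{x-a}{x^*-a^*}\leq 0$, and the equality case is equivalent to this infimum being exactly $0$.

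Next I would use monotonicity and maximality separately. If $(x,x^*)\in\gra A$, then monotonicity gives $\scal{x-a}{x^*-a^*}\geq 0$ for every $(a,a^*)\in\gra A$, while the choice $(a,a^*)=(x,x^*)$ forces the infimum to be $\leq 0$; hence it equals $0$ and $F_A(x,x^*)=\scal{x}{x^*}$, which settles the ``if'' direction of the equality statement. For the inequality at an arbitrary $(x,x^*)$, I would argue by contradiction: were the infimum strictly positive, then in particular $\scal{x-a}{x^*-a^*}\geq 0$ for all $(a,a^*)\in\gra A$, so $\gra A\cup\{(x,x^*)\}$ is monotone; maximality of $A$ then gives $(x,x^*)\in\gra A$, and the term for $(a,a^*)=(x,x^*)$ equals $0$, contradicting positivity of the infimum. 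Finally, for the ``only if'' direction, suppose $F_A(x,x^*)=\scal{x}{x^*}$, i.e.\ the infimum is $0$; since an infimum is a lower bound, $\scal{x-a}{x^*-a^*}\geq 0$ for all $(a,a^*)\in\gra A$, so again $\gra A\cup\{(x,x^*)\}$ is monotone and maximality yields $(x,x^*)\in\gra A$.

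I do not expect a genuine obstacle here: no duality, compactness, or separation is needed, only the rewriting identity and two clean applications of maximal monotonicity. The single point deserving care is the logical order, so that the proof of the inequality does not implicitly presuppose the equality characterization that comes after it; this is avoided above because the contradiction step uses nothing beyond the fact that membership in $\gra A$ annihilates one term of the infimum.
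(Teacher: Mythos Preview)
Your argument is correct and is in fact the standard short proof of Fitzpatrick's result. Note, however, that the paper does not supply its own proof of this statement: it records it as a \emph{Fact} and refers to \cite[Corollary~3.9]{Fitz88}. So there is no in-paper proof to compare against; what you have written is essentially the argument one finds in Fitzpatrick's original note and in later expositions (e.g., Simons). The rewriting identity $\scal{x}{a^*}+\scal{a}{x^*}-\scal{a}{a^*}=\scal{x}{x^*}-\scal{x-a}{x^*-a^*}$ is exactly the ingredient that turns the statement into a tautology about monotonicity and maximality, and your handling of the three cases (membership, strict inequality via contradiction, equality forcing membership) is clean. One cosmetic remark: in the contradiction step you do not need to phrase it as a strictly positive infimum at all---once you have proved the equality characterization for points of $\gra A$ first (which you do), the general inequality follows by splitting into $(x,x^*)\in\gra A$ (already done) and $(x,x^*)\notin\gra A$, where maximality immediately produces some $(a,a^*)\in\gra A$ with $\scal{x-a}{x^*-a^*}<0$, hence $F_A(x,x^*)>\scal{x}{x^*}$. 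This also gives the strict inequality off the graph in one line and makes the logical order you were careful about completely transparent.
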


\begin{fact}[Simons]
\emph{(See \cite[Theorem~24.1(c)]{Si2}.)}
\label{f:referee1}
Let $A, B:X\To X^*$ be maximal monotone operators. Assume
$\bigcup_{\lambda>0} \lambda\left[P_X(\dom F_A)-P_X(\dom F_B)\right]$
is a closed subspace, where $P_X:(x,x^*)\in X\times X^*\rightarrow x$.
If
\begin{equation}
(x,x^*)\,\text{is monotonically related to $\gra (A+B)$}
\Rightarrow x\in\dom A\cap\dom B,
\end{equation}
then $A+B$ is maximal monotone.
\end{fact}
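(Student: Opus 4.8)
\noindent\emph{Proof sketch.}
The plan is to show that $A+B$ has no proper monotone extension, i.e.\ that whenever $(z,z^*)\in X\times X^*$ is monotonically related to $\gra(A+B)$, one has $(z,z^*)\in\gra(A+B)$. The second hypothesis immediately gives $z\in\dom A\cap\dom B$, so $Az$ and $Bz$ are nonempty, and what is left is to produce a decomposition $z^*=a^*+b^*$ with $a^*\in Az$ and $b^*\in Bz$. To detect such a decomposition I would use Fitzpatrick's characterization (Fact~\ref{f:Fitz}): $(z,a^*)\in\gra A\iff F_A(z,a^*)=\scal{z}{a^*}$, similarly for $B$, and always $F_A\geq\scal{\cdot}{\cdot}$, $F_B\geq\scal{\cdot}{\cdot}$.

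Accordingly I would analyse the convex, weak$^*$-lower semicontinuous function
\begin{equation*}
\Lambda\colon X^*\to\RX,\qquad
\Lambda(u^*):=F_A(z,u^*)+F_B(z,z^*-u^*)-\scal{z}{z^*}.
\end{equation*}
Because $F_A(z,u^*)\geq\scal{z}{u^*}$, $F_B(z,z^*-u^*)\geq\scal{z}{z^*-u^*}$, and these two products sum to $\scal{z}{z^*}$, one has $\Lambda\geq 0$; and if $\Lambda$ attains $0$ at some $\bar u^*$, then both summands equal their bilinear terms, so Fact~\ref{f:Fitz} forces $(z,\bar u^*)\in\gra A$ and $(z,z^*-\bar u^*)\in\gra B$, whence $z^*\in Az+Bz$. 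Thus everything reduces to proving that $\inf_{u^*\in X^*}\Lambda(u^*)=0$ and that this infimum is attained.

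For this I would dualise on $X$. Set $g_A,g_B\colon X\to\RX$ by $g_A(x):=\inf_{x^*\in Ax}\scal{x-z}{x^*}$ and $g_B(x):=\inf_{x^*\in Bx}\scal{x-z}{x^*}$; these are convex and proper (properness uses $z\in\dom A\cap\dom B$), their conjugates are $g_A^*=F_A(z,\cdot)$ and $g_B^*=F_B(z,\cdot)$, and therefore $\inf_{u^*}[F_A(z,u^*)+F_B(z,z^*-u^*)]=(g_A^*\,\infconv\,g_B^*)(z^*)$. A routine computation identifies $(g_A+g_B)^*(z^*)=\scal{z}{z^*}-\inf_{(x,x^*)\in\gra(A+B)}\scal{z-x}{z^*-x^*}$, where the infimum on the right is $\geq 0$ precisely because $(z,z^*)$ is monotonically related to $\gra(A+B)$. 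In general $(g_A^*\,\infconv\,g_B^*)(z^*)\geq(g_A+g_B)^*(z^*)$, which by itself only re-proves $\Lambda\geq 0$; the real input is the hypothesis that $\bigcup_{\lambda>0}\lambda[P_X(\dom F_A)-P_X(\dom F_B)]$ is a closed subspace, which is an Attouch--Br\'ezis-type constraint qualification forcing the infimal convolution to be \emph{exact}. Exactness delivers both $(g_A^*\,\infconv\,g_B^*)(z^*)=(g_A+g_B)^*(z^*)$ --- hence $\inf_{u^*}\Lambda(u^*)=-\inf_{(x,x^*)\in\gra(A+B)}\scal{z-x}{z^*-x^*}\leq 0$, so in fact $=0$ --- and a minimiser $\bar u^*\in X^*$. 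The second paragraph then yields $z^*\in Az+Bz$, so $(z,z^*)\in\gra(A+B)$ and $A+B$ is maximal monotone.

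The step I expect to be the main obstacle is the passage through Attouch--Br\'ezis exactness, for two reasons. First, the underlying space is nonreflexive, so a priori neither is the infimal convolution attained nor is Fenchel duality gapless, and the closed-subspace condition must be exploited to control both defects simultaneously. Second, the constraint qualification in the statement is phrased with the projected domains $P_X(\dom F_A)$ and $P_X(\dom F_B)$, which in general strictly contain $\dom A=\dom g_A$ and $\dom B=\dom g_B$; so one must either set up the pair of convex functions to be dualised so that exactly those projected sets occur as their effective domains, or else establish the (known but nontrivial) relations between $P_X(\dom F_A)$ and $\dom A$ that make the two constraint qualifications interchangeable in this context. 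Once that bookkeeping is in place, the Fitzpatrick equality case (Fact~\ref{f:Fitz}) closes the argument without further difficulty.
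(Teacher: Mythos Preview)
The paper does not prove this statement; it records it as a Fact due to Simons with a citation to \cite[Theorem~24.1(c)]{Si2}, so there is no in-paper proof to compare against. Your overall strategy --- reduce to finding $\bar u^*$ with $F_A(z,\bar u^*)+F_B(z,z^*-\bar u^*)=\scal{z}{z^*}$ and then invoke the equality case of Fact~\ref{f:Fitz} --- is the right one and is in spirit how Simons proceeds.

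However, your execution has a concrete gap: the functions $g_A(x)=\inf_{x^*\in Ax}\scal{x-z}{x^*}$ are \emph{not} convex in general. For instance, with $X=\RR$, $A=(\cdot)^3$ (continuous and increasing, hence maximal monotone) and $z=1$, one gets $g_A(x)=(x-1)x^3$, whose second derivative $6x(2x-1)$ is negative on $\left]0,\tfrac12\right[$. Thus the Attouch--Br\'ezis theorem you invoke, which requires proper lower semicontinuous convex data on $X$, does not apply to the pair $g_A,g_B$. (Properness, by contrast, is fine once $z\in\dom A\cap\dom B$: monotonicity gives $g_A(x)\geq\scal{x-z}{a^*}$ for any fixed $a^*\in Az$.) Passing to the biconjugates does not rescue the argument either, since $(g_A^{**}+g_B^{**})^*\geq(g_A+g_B)^*$ points the wrong way for the comparison you need with $\scal{z}{z^*}$.

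Simons' actual proof avoids this by working not with the one-variable surrogates $g_A,g_B$ on $X$ but with the genuinely convex lower semicontinuous Fitzpatrick functions $F_A,F_B$ on the product $X\times X^*$, and applying the Fenchel/Attouch--Br\'ezis machinery there. The constraint qualification then naturally involves $\dom F_A$ and $\dom F_B$ in $X\times X^*$, and after projection one lands exactly on the hypothesis about $P_X(\dom F_A)-P_X(\dom F_B)$ --- which also explains why the statement is phrased with those projected domains rather than with $\dom A$ and $\dom B$. Your last paragraph correctly flags this domain mismatch as an obstacle; the resolution is not bookkeeping between two equivalent constraint qualifications on $X$, but rather a change of the ambient space on which the duality is performed.
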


\begin{fact}[Simons]
\emph{(See \cite[Lemma~19.7 and Section~22]{Si2}.)}
\label{f:referee}
Let $A:X\To X^*$ be a monotone linear relation such that $\gra A
\neq\varnothing$.
Then the function
\begin{equation}
g\colon X\times X^* \to \RX\colon
(x,x^*)\mapsto \scal{x}{x^*} + \iota_{\gra A}(x,x^*)
\end{equation}
is proper and convex.
\end{fact}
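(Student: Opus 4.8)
The plan is to establish properness and convexity separately, using two structural features of $\gra A$: it is a linear subspace, and the quadratic form $(x,x^*)\mapsto\scal{x}{x^*}$ is nonnegative on it.

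For \emph{properness}, I would first note that since $\gra A$ is a nonempty linear subspace it contains $(0,0)$, so $\dom g=\gra A\neq\varnothing$ and $g$ is not identically $+\infty$. To rule out the value $-\infty$, apply the monotonicity inequality to an arbitrary $(x,x^*)\in\gra A$ together with the pair $(0,0)\in\gra A$; this yields $\scal{x}{x^*}\geq 0$, hence $g\geq 0$ on all of $X\times X^*$. Thus $g$ is proper.

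For \emph{convexity}, let $q\colon X\times X^*\to\RR\colon(x,x^*)\mapsto\scal{x}{x^*}$; this is a quadratic form whose associated symmetric bilinear form is $b\big((x,x^*),(y,y^*)\big)=\tfrac12\big(\scal{x}{y^*}+\scal{y}{x^*}\big)$, so that for all $u,v\in X\times X^*$ and every $\lambda\in[0,1]$ one has the elementary identity $\lambda q(u)+(1-\lambda)q(v)-q\big(\lambda u+(1-\lambda)v\big)=\lambda(1-\lambda)\,q(u-v)$. The key observation is that monotonicity of the linear relation $A$ says precisely that $q\geq 0$ on $\gra A$: for $u=(x,x^*)$ and $v=(y,y^*)$ in $\gra A$ the monotonicity inequality reads $q(u-v)=\scal{x-y}{x^*-y^*}\geq 0$, and since $\gra A$ is a subspace, $u-v$ runs through all of $\gra A$ (take $v=(0,0)$). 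Substituting this back into the displayed identity shows that the restriction of $q$ to the subspace $\gra A$ is convex; since $\iota_{\gra A}$ is convex (as $\gra A$ is convex) and $g=q+\iota_{\gra A}$, we conclude that $g$ is convex.

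The only point requiring care is the translation of monotonicity into nonnegativity of $q$ on $\gra A$, and this is exactly where the hypothesis that $A$ is a linear relation is used, via closure of $\gra A$ under the difference $u-v$. Once that is granted, convexity of $g$ follows from the purely algebraic quadratic-form identity, with no topological input, so the argument is valid in an arbitrary Banach space.
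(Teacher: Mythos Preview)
Your argument is correct. The paper does not supply its own proof of this fact; it merely cites Simons' monograph, so there is no in-paper proof to compare against. Your route---properness from $g\geq 0$ via monotonicity against $(0,0)$, and convexity from the quadratic-form identity
\[
\lambda q(u)+(1-\lambda)q(v)-q\big(\lambda u+(1-\lambda)v\big)=\lambda(1-\lambda)\,q(u-v)
\]
together with $q\geq 0$ on the subspace $\gra A$---is exactly the standard argument one finds in Simons' treatment of $q$-positive sets. The only cosmetic point is the final sentence ``$g=q+\iota_{\gra A}$ is convex as a sum'': since $q$ is not convex on all of $X\times X^*$, it is cleaner to say that $g$ is the extension by $+\infty$ of the convex function $q|_{\gra A}$ off the convex set $\gra A$; but your identity already delivers the convexity inequality directly for points of $\gra A$, and the inequality is trivial otherwise, so nothing is missing.
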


\begin{fact}[Simons]
\emph{(See \cite[Lemma~2.2]{Si3}.)}
\label{slope}
Let $f:X\to\RX$ be proper, lower semicontinuous, and convex.
Let $x\in X$ and $\lambda\in\RR$ be such that
$\inf f<\lambda<f(x) \leq\pinf$, and set
\begin{align*}
K:=\sup_{a\in X, a\neq x}\frac{\lambda-f(a)}{\|x-a\|}.\end{align*}
Then $K\in\RPP$ and
for every $\varepsilon\in\left]0,1\right[$,
there exists $(y,y^*)\in \gra \partial f$ such that
\begin{align}
\langle y-x,y^*\rangle\leq-(1-\varepsilon)K\|y-x\|<0.\end{align}
\end{fact}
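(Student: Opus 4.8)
The plan is to prove the two assertions — that $K\in\RPP$ and that the asserted pair exists — separately, the second by combining Ekeland's variational principle with the subdifferential sum rule of Fact~\ref{f:F4}.

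First I would dispose of $K\in\RPP$. Positivity is immediate: since $\inf f<\lambda$ there is $a_1$ with $f(a_1)<\lambda$, and then $a_1\neq x$ because $\lambda<f(x)$, so the corresponding difference quotient is strictly positive and $K>0$. For finiteness I would fix an affine minorant $f\geq\scal{\cdot}{x^*}-\gamma$ of $f$; this bounds $(\lambda-f(a))/\|x-a\|$ above by $(\lambda+\gamma-\scal{x}{x^*})/\|x-a\|+\|x^*\|$, which is innocuous once $\|x-a\|$ is bounded away from $0$, whereas for $a$ close to $x$ the lower semicontinuity of $f$ at $x$ together with $\lambda<f(x)$ makes the quotient negative; hence $K<\pinf$.

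The substantive claim is the second one. Given $\varepsilon\in\zeroun$ I would put $\delta:=\varepsilon K/4$ and, exploiting that $K$ is a supremum, choose $a_0\neq x$ with $f(a_0)+(K-\delta)\|a_0-x\|<\lambda$; writing $d_0:=\|a_0-x\|>0$, I would then work with $\phi:=f+(K-\delta)\|\cdot-x\|$ on the closed ball of radius $3d_0$ about $x$. The definition of $K$ forces $f(a)+K\|a-x\|\geq\lambda$ for every $a$ (the value $a=x$ being covered by $\lambda<f(x)$), so on that ball $\phi\geq\lambda-3\delta d_0$, while $\phi(a_0)<\lambda$; thus $\phi$ is proper, lower semicontinuous and bounded below on a complete metric space, and $a_0$ is within $3\delta d_0$ of the infimum of $\phi$ over that ball. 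Ekeland's variational principle, with tolerance $3\delta d_0$ and radius $d_0$, then yields a point $w$ with $\|w-a_0\|\leq d_0$, hence $\|w-x\|\leq 2d_0$ so that $w$ is interior to the ball, at which $\phi(\cdot)+3\delta\|\cdot-w\|$ is minimized over the ball — and therefore, by interiority and convexity, over all of $X$. Since $\phi(w)\leq\phi(a_0)<\lambda$ one gets $f(w)<\lambda<f(x)$, so in particular $w\neq x$.

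To conclude I would feed the minimality $0\in\partial\big[f+(K-\delta)\|\cdot-x\|+3\delta\|\cdot-w\|\big](w)$ into Fact~\ref{f:F4} (applied twice, the two norm terms being finite and continuous on $X$), obtaining $y^*\in\partial f(w)$, $u^*\in\partial\|\cdot-x\|(w)$, and $v^*$ in the closed unit ball of $X^*$ with $y^*=-(K-\delta)u^*-3\delta v^*$. Setting $y:=w$ and using $w\neq x$ (so $\scal{w-x}{u^*}=\|w-x\|$) together with $|\scal{w-x}{v^*}|\leq\|w-x\|$ and $4\delta=\varepsilon K$, I get $\scal{y-x}{y^*}\leq-(K-4\delta)\|y-x\|=-(1-\varepsilon)K\|y-x\|<0$, which is exactly the claim. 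I expect the real obstacle to lie here: the infimum of $f+(K-\delta)\|\cdot-x\|$ need not be attained, so one cannot simply read a subgradient off a minimizer; it is precisely Ekeland's approximate-minimizer mechanism — with the parameters $\delta=\varepsilon K/4$ and ball radius $3d_0$ chosen so the error terms absorb into the $\varepsilon K$ budget — that converts the slope information carried by $K$ into a bona fide element of $\gra\partial f$.
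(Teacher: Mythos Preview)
Your argument is correct. Note, however, that the paper does not supply its own proof of this statement: it is recorded as a \emph{Fact} with a bare citation to \cite[Lemma~2.2]{Si3}. So there is no in-paper proof to compare against; your Ekeland-based argument is precisely the standard route used in Simons' original reference, and the parameter choices ($\delta=\varepsilon K/4$, ball of radius $3d_0$, Ekeland radius $d_0$) are tuned correctly so that the perturbation terms $(K-\delta)$ and $3\delta$ combine to give the required $(1-\varepsilon)K$ slope. The only step where a careless reader might stumble --- passing from a minimum of $\phi+3\delta\|\cdot-w\|$ over the closed ball to a global minimum --- you handle cleanly by noting $\|w-x\|\leq 2d_0<3d_0$ and invoking convexity.
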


\begin{fact}
\emph{(See \cite[Therorem~2.4.14]{Zalinescu}.)}
\label{zls:1}
Let $f:X\to \RX$ be a sublinear function.
Then the following hold.
\begin{enumerate}
%\item\label{zf:1}
%$f^*=\iota_{\partial f(0)}$.
\item\label{zf:2} $\partial f(x)=\{x^*\in \partial f(0)\mid \langle x^*,x\rangle=f(x)\},\quad
\forall x\in \dom f$.
\item\label{zf:3} $\partial f(0)\neq\varnothing\Leftrightarrow$ $f$ is lower semicontinuous at $0$.
\item\label{zf:4}If $f$ is lower semicontinuous, then
$f=\sup\langle\cdot,\partial f(0)\rangle$.
\end{enumerate}
\end{fact}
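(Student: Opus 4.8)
The plan is to prove the three items in turn, reading off \eqref{zf:3}--\eqref{zf:4} from \eqref{zf:2} together with the defining features of a sublinear $f$: namely $f(0)=0$, positive homogeneity, subadditivity (hence convexity of $f$, and $\dom f$ a convex cone), and the fact that an $\RX$-valued $f$ never takes the value $-\infty$. For \eqref{zf:2} I would argue straight from the subgradient inequality. Fix $x\in\dom f$ and $x^*\in\partial f(x)$, so $\langle y-x,x^*\rangle+f(x)\le f(y)$ for all $y$. Testing at $y=0$ gives $\langle x,x^*\rangle\ge f(x)$; testing at $y=2x$ (which lies in $\dom f$ by homogeneity) and using $f(2x)=2f(x)$ gives $\langle x,x^*\rangle\le f(x)$. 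Hence $\langle x,x^*\rangle=f(x)$, and substituting this back into the subgradient inequality yields $\langle y,x^*\rangle\le f(y)$ for all $y$, i.e.\ $x^*\in\partial f(0)$. The reverse inclusion is immediate: if $x^*\in\partial f(0)$ and $\langle x,x^*\rangle=f(x)$, then $\langle y-x,x^*\rangle+f(x)=\langle y,x^*\rangle\le f(y)$ for every $y$.

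For \eqref{zf:3}, the forward direction is easy: a subgradient $x^*$ at $0$ is a continuous linear minorant of $f$ agreeing with $f$ at $0$, so $\liminf_{y\to 0}f(y)\ge\lim_{y\to 0}\langle y,x^*\rangle=0=f(0)$. For the converse I would separate. If $f$ is lower semicontinuous at $0$, then $\overline{\epi f}$ is a closed convex set whose $0$-section is bounded below by $f(0)=0$, so it misses the point $(0,-\varepsilon)$ for each $\varepsilon>0$. Separating $(0,-\varepsilon)$ from $\overline{\epi f}$ produces a nonzero $(x^*,\beta)\in X^*\times\RR$ with $\langle y,x^*\rangle+\beta t$ bounded below on $\epi f$ by a constant strictly exceeding $-\beta\varepsilon$. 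Feeding in the vertical ray $\{0\}\times[0,\infty[\subseteq\epi f$ forces $\beta\ge0$, and $\beta=0$ is impossible since $0\in\dom f$ would then yield a contradiction; so $\beta>0$ and, after normalising $\beta=1$, one obtains a continuous affine minorant $\langle\cdot,x^*\rangle+c\le f$. Positive homogeneity then removes the constant: replacing the argument by $\lambda y$, dividing by $\lambda>0$, and letting $\lambda\to\infty$ gives $\langle\cdot,-x^*\rangle\le f$ with equality at $0$, i.e.\ $-x^*\in\partial f(0)$.

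For \eqref{zf:4}, when $f$ is lower semicontinuous everywhere it is proper, lower semicontinuous, and convex, so $f=f^{**}$ by Fenchel--Moreau. It then suffices to compute $f^*$. Positive homogeneity makes $y\mapsto\langle y,x^*\rangle-f(y)$ positively homogeneous, so its supremum over $y$ is $0$ (attained at $y=0$) when $\langle\cdot,x^*\rangle\le f$ and $+\infty$ otherwise; and by the description obtained in \eqref{zf:2} the condition $\langle\cdot,x^*\rangle\le f$ is exactly $x^*\in\partial f(0)$. Hence $f^*=\iota_{\partial f(0)}$, so $f=f^{**}=(\iota_{\partial f(0)})^*=\sup\langle\cdot,\partial f(0)\rangle$, and $\partial f(0)$ is nonempty by \eqref{zf:3}.

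The step I expect to be the real work is the nontrivial implication in \eqref{zf:3}: carrying out the Hahn--Banach separation cleanly in a general, possibly nonreflexive, Banach space, checking that $(0,-\varepsilon)$ genuinely lies outside $\overline{\epi f}$ using only lower semicontinuity at $0$ and $f(0)=0$, and ruling out the degenerate vertical separating hyperplane --- the point at which $0\in\dom f$ enters. The remaining manipulations are routine applications of the subgradient inequality and of positive homogeneity.
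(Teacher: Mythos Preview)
The paper does not prove this statement itself; it is stated as a Fact with a reference to \cite[Theorem~2.4.14]{Zalinescu}, so there is no proof in the paper to compare against. Your argument is correct and is essentially the standard one: \ref{zf:2} via the subgradient inequality tested at $y=0$ and $y=2x$; the nontrivial direction of \ref{zf:3} via Hahn--Banach separation of $(0,-\varepsilon)$ from $\overline{\epi f}$ in $X\times\RR$, followed by elimination of the affine constant using positive homogeneity; and \ref{zf:4} via the computation $f^*=\iota_{\partial f(0)}$ together with Fenchel--Moreau. The only blemish is a sign inconsistency in your write-up of \ref{zf:3}: with the separating functional bounded \emph{below} on $\epi f$, the affine minorant you actually obtain after normalising $\beta=1$ is $\langle\cdot,-x^*\rangle+\alpha\le f$, not $\langle\cdot,x^*\rangle+c\le f$. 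Your final conclusion $-x^*\in\partial f(0)$ is consistent with the former, so this is a typographical slip rather than a mathematical error. One further minor remark: the identity $\{x^*\mid\langle\cdot,x^*\rangle\le f\}=\partial f(0)$ that you invoke in \ref{zf:4} is simply the definition of $\partial f(0)$ (using $f(0)=0$), not something derived from \ref{zf:2}.
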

\begin{fact}\emph{(See \cite[Proposition~3.3 and Proposition~1.11]{ph}.)}
\label{pheps:1}Let $f:X\rightarrow\RX$ be a lower semicontinuous convex
 and $\intdom f\neq\varnothing$.
Then $f$ is continuous on $\intdom f$ and $\partial f(x)\neq\varnothing$ for every $x\in\intdom f$.
\end{fact}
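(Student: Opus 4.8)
The plan is to prove the two assertions in succession, continuity first and then subdifferentiability, with the continuity statement carrying essentially all the weight. Since this is the classical fact that a proper lower semicontinuous convex function on a Banach space is continuous throughout the interior of its domain, I would run the standard Baire-category argument. One may first dispose of a triviality: if $f$ took the value $\minf$ at some point, then convexity would force $f\equiv\minf$ on a neighbourhood of any point of $\intdom f$, contradicting $\intdom f\subseteq\dom f$; hence $f$ is proper.

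\emph{Step 1 (local upper boundedness).} Fix $x_0\in\intdom f$ and $r>0$ with $x_0+rB_X\subseteq\dom f$, and put $C_n:=\menge{x\in x_0+rB_X}{f(x)\le n}$ for $\nnn$. Each $C_n$ is closed since $f$ is lower semicontinuous, and $\bigcup_{\nnn}C_n=x_0+rB_X$ because $f$ is finite on $\dom f$. As $X$ is complete, the closed ball $x_0+rB_X$ is a complete metric space, so Baire's theorem produces an index $N$ and a nonempty relatively open set on which $f\le N$; intersecting with the interior of the ball yields a genuine ball $x_1+\rho B_X\subseteq\intdom f$ with $f\le N$ on it. I would then propagate this bound to an arbitrary $x\in\intdom f$ by the familiar convexity trick: choose $y\in\dom f$ with $x$ strictly between $x_1$ and $y$, note that a small ball about $x$ is contained in $\conv\big((x_1+\rho' B_X)\cup\{y\}\big)$ for a suitable $\rho'>0$, and conclude that $f\le\max\{N,f(y)\}$ on that ball.

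\emph{Step 2 (continuity and subdifferentiability).} Once $f$ is bounded above on a ball centred at a given $x\in\intdom f$, the standard estimates for convex functions show that $f$ is also bounded below near $x$ (reflect through $x$) and then locally Lipschitz, hence continuous at $x$; as $x$ was arbitrary, $f$ is continuous on $\intdom f$. For $\partial f(x)\neq\varnothing$: continuity of $f$ at $x$ makes $\epi f$ have nonempty interior in $X\times\RR$ while $(x,f(x))$ is not interior to it, so the Hahn--Banach separation theorem supplies a nonzero $(x^*,\alpha)\in X^*\times\RR$ with $\scal{y}{x^*}+\alpha t\le\scal{x}{x^*}+\alpha f(x)$ for all $(y,t)\in\epi f$; letting $t\to\pinf$ forces $\alpha\le 0$, and $\alpha=0$ is impossible (it would give a nonzero $x^*$ with $\scal{y-x}{x^*}\le 0$ on a neighbourhood of $x$). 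Dividing by $-\alpha$ and taking $t=f(y)$ gives $\scal{y-x}{-x^*/\alpha}+f(x)\le f(y)$ for every $y$, i.e.\ $-x^*/\alpha\in\partial f(x)$.

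The only step with genuine content is Step~1: the appeal to Baire's theorem — which is exactly where completeness of $X$ is indispensable, the statement being false in incomplete normed spaces — together with the convexity propagation of the upper bound. Everything downstream is routine convex analysis.
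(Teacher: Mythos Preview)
The paper does not give its own proof of this statement: it is recorded as a \emph{Fact} with a citation to Phelps \cite[Propositions~3.3 and~1.11]{ph}. Your proposal is correct and is precisely the classical argument one finds in that reference --- Baire category on closed sublevel sets over a ball in $\intdom f$ to obtain a point of local upper boundedness, propagation of the bound by convexity to every point of $\intdom f$, the standard bounded-above $\Rightarrow$ locally Lipschitz step, and finally Hahn--Banach separation of $(x,f(x))$ from $\inte\epi f$ with the usual nonverticality argument to produce a subgradient. (One small remark: since here $f$ maps into $\RX=\left]-\infty,+\infty\right]$, the preliminary disposal of the value $-\infty$ is unnecessary, though harmless.) There is nothing to compare against in the paper itself.
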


\begin{lemma}\label{rcf:1}
Let $f:X\to\RX$ be a sublinear function.
Then $\dom f+\intdom f=\intdom f$.
\end{lemma}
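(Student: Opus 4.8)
The plan is to dispose of a trivial case and then establish the two inclusions. If $\intdom f=\emp$, then $\dom f+\intdom f=\emp=\intdom f$ and there is nothing to prove, so I would henceforth assume $\intdom f\neq\emp$. For the inclusion $\intdom f\subseteq\dom f+\intdom f$, I would use that $f$ is sublinear to get $f(0)=0$, hence $0\in\dom f$, and therefore $\intdom f=\{0\}+\intdom f\subseteq\dom f+\intdom f$.

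For the reverse inclusion $\dom f+\intdom f\subseteq\intdom f$, the key structural observation is that subadditivity forces $\dom f$ to be closed under addition: if $x,y\in\dom f$, then $f(x+y)\le f(x)+f(y)<\pinf$, so $x+y\in\dom f$; in particular $x+\dom f\subseteq\dom f$ whenever $x\in\dom f$. Now fix $x\in\dom f$ and $y\in\intdom f$, and choose an open neighbourhood $U$ of $0$ in $X$ with $y+U\subseteq\dom f$. Then $(x+y)+U=x+(y+U)\subseteq x+\dom f\subseteq\dom f$, which shows that $x+y$ is an interior point of $\dom f$, i.e.\ $x+y\in\intdom f$. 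Combining the two inclusions yields the claim.

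I do not expect any genuine obstacle here. It is worth noting that positive homogeneity of $f$ plays no role: subadditivity together with $f(0)=0$ already makes $\dom f$ a convex cone, and the translation argument above relies only on closure under addition. An alternative route would be to invoke the standard fact that $C+\inte C=\inte C$ for any convex set $C$ with $0\in C$, applied to $C=\dom f$, but the direct neighbourhood argument given here is shorter and self-contained.
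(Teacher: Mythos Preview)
Your proof is correct and follows essentially the same approach as the paper: both dispose of the empty case, use $0\in\dom f$ for one inclusion, and for the other use subadditivity to get $x+\dom f\subseteq\dom f$ and then a neighbourhood (the paper uses a ball $\delta B_X$, you use a general open $U$) around the interior point to conclude $x+y\in\intdom f$. Your additional remark that positive homogeneity is not needed is accurate but not used in the paper.
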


\begin{proof}
The result is trivial when $\intdom f=\emp$ so we assume that
$x_0\in\intdom f$.
Then there exists $\delta>0$ such that $x_0+ \delta B_X\subseteq\dom f$.
By sublinearity,
$\forall y\in\dom f$, we have $y+x_0+ \delta B_X\subseteq\dom f$.
Hence \begin{align*}y+x_0\in\inte\dom f.\end{align*}
Then
  $\dom f+\intdom f\subseteq\intdom f$.
  Since $0\in\dom f$,   $\intdom f\subseteq\dom f+\intdom f$. Hence $\dom f+\intdom f=\intdom f$.
\end{proof}

\begin{lemma}\label{rcf:001}
Let $A:X\To X^*$ be a maximal monotone linear relation,
and let $z\in X\cap (A0)^\bot$.
Then $z\in\overline{\dom A}$.
\end{lemma}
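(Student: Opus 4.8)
The plan is to combine two elementary features of a maximal monotone linear relation with a Hahn--Banach argument. Since $\gra A$ is a linear subspace it contains $(0,0)$, so $A0\ni 0$ is a linear subspace of $X^*$ and $\dom A=P_X(\gra A)$ is a linear subspace of $X$; hence $\overline{\dom A}$ is a \emph{closed} linear subspace. Moreover, applying monotonicity of $A$ to an arbitrary $(a,a^*)\in\gra A$ together with $(0,0)\in\gra A$ gives $\scal{a}{a^*}\geq 0$, i.e.\ $A$ is ``positive''. As motivation I would first record the (easy) inclusion $\dom A\subseteq X\cap(A0)^\bot$: for $(a,a^*)\in\gra A$ and $b^*\in A0$ we have $(a,a^*-tb^*)\in\gra A$ for all $t\in\RR$, and monotonicity against $(0,0)$ forces $\scal{a}{b^*}=0$. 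Thus $\overline{\dom A}\subseteq X\cap(A0)^\bot$, and the lemma is exactly the reverse inclusion.

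The heart of the proof is the claim that $(\dom A)^\bot\subseteq A0$. To see this, fix $z^*\in(\dom A)^\bot$. For every $(a,a^*)\in\gra A$ one has $\scal{a}{z^*}=0$ and $\scal{a}{a^*}\geq 0$, whence
\[
\scal{a-0}{a^*-z^*}=\scal{a}{a^*}-\scal{a}{z^*}=\scal{a}{a^*}\geq 0 .
\]
Therefore $(0,z^*)$ is monotonically related to $\gra A$, and maximal monotonicity of $A$ yields $(0,z^*)\in\gra A$, i.e.\ $z^*\in A0$. This is the step that does the real work; once the cancellation $\scal{a}{a^*}-\scal{a}{z^*}=\scal{a}{a^*}$ is spotted the argument is short, and I do not anticipate a serious obstacle beyond recognizing it.

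To finish, suppose $z\in X\cap(A0)^\bot$ but $z\notin\overline{\dom A}$. Since $\overline{\dom A}$ is a closed subspace, the Hahn--Banach separation theorem provides $z^*\in X^*$ with $z^*\in(\overline{\dom A})^\bot=(\dom A)^\bot$ and $\scal{z}{z^*}\neq 0$. By the claim, $z^*\in A0$, so $z\in(A0)^\bot$ forces $\scal{z}{z^*}=0$, a contradiction. Hence $z\in\overline{\dom A}$. (Equivalently, and without contradiction: any $z^*\in(\dom A)^\bot\subseteq A0$ and any $z\in(A0)^\bot\cap X$ satisfy $\scal{z}{z^*}=0$, so $z$ is annihilated by every continuous functional vanishing on $\dom A$, i.e.\ $z\in\overline{\dom A}$.)
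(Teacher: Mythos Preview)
Your proof is correct and follows essentially the same route as the paper: separate $z$ from the closed subspace $\overline{\dom A}$ to obtain $z^*\in(\dom A)^\bot$ with $\scal{z}{z^*}\neq 0$, observe that $(0,z^*)$ is monotonically related to $\gra A$ so $z^*\in A0$ by maximality, and contradict $z\in(A0)^\bot$. The only difference is presentational---you factor out the inclusion $(\dom A)^\bot\subseteq A0$ as an explicit claim and spell out the positivity $\scal{a}{a^*}\geq 0$, whereas the paper does the monotone-relatedness check inline.
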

\begin{proof}
Suppose to the contrary that
$z\notin \overline{\dom A}$.
Then the Separation Theorem provides $w^*\in X^*$ such that
\begin{align}
\label{ree:3}
\langle z, w^*\rangle>0
\quad\text{and}\quad
w^*\in \overline{\dom A}^\bot.
\end{align}
Thus, $(0, w^*)$ is monotonically related to $\gra A$.
Since $A$ is maximal monotone, we deduce that
$w^*\in A0$. By assumption,
$\langle z, w^*\rangle=0$, which contradicts \eqref{ree:3}.
Hence, $z\in\overline{\dom A}$.
\end{proof}

The proof of the next result follows closely the proof of
\cite[Theorem~53.1]{Si2}.

\begin{lemma}\label{rcf:01}
Let $A:X\To X^*$ be a  monotone linear relation,
and let $f:X\to\RX$ be a proper lower semicontinuous convex function.
Suppose that $\dom A \cap \inte \dom \partial f\neq \varnothing$,
$(z,z^*)\in X\times X^*$ is monotonically related to
$\gra (A+\partial f)$, and that $z\in\dom A$.
Then  $z\in  \dom \partial f$.
\end{lemma}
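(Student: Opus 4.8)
The plan is to argue by contradiction: suppose $z \in \dom A$ but $z \notin \dom \partial f$. The obstacle is that $z$ might still lie in $\dom f$ (or even in $\overline{\dom f}$ without being in $\dom f$), so I cannot immediately say $f(z) = +\infty$; instead I want to extract from the failure of subdifferentiability a sequence of points in $\gra \partial f$ whose slopes blow up, and play this against the monotone relationship of $(z,z^*)$ with $\gra(A+\partial f)$. Concretely, pick a base point $x_0 \in \dom A \cap \inte\dom\partial f$; then $f$ is continuous at $x_0$ by Fact~\ref{pheps:1}, so $\inf f \le f(x_0) < +\infty$, and since $z \notin \dom\partial f$ we have (again by Fact~\ref{pheps:1}) that $z \notin \inte\dom f$; more importantly I can choose $\lambda \in \RR$ with $\inf f < \lambda < f(z) \le +\infty$ — this needs $f(z) > \inf f$, which holds because otherwise $z$ would be a minimizer and hence $0 \in \partial f(z)$, contradicting $z \notin \dom\partial f$. (If $f \equiv$ a constant or $\inf f = -\infty$ one handles this trivially or separately, but for a sublinear/convex $f$ with the stated hypotheses this is routine.)

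Next I would invoke Fact~\ref{slope} with this $z$ and $\lambda$: it yields $K \in \RPP$ and, for each $\varepsilon \in \zeroun$, a pair $(y,y^*) \in \gra\partial f$ with $\langle y - z, y^*\rangle \le -(1-\varepsilon)K\|y-z\| < 0$. In particular $y \ne z$. Now I want to promote $y^*$ to an element of $\gra(A+\partial f)$ at the point $y$; for that I need $y \in \dom A$. This is where the constraint qualification does its work via a line-segment argument in the spirit of \cite[Theorem~53.1]{Si2}: since $\dom A$ is a linear subspace (containing $z$ and $x_0$) and $\inte\dom\partial f$ is open and convex, the segment $[x_0, \cdot]$ trick lets me assume the $(y,y^*)$ produced by the slope lemma can be taken with $y$ on a segment from a point of $\dom A \cap \inte\dom\partial f$, hence $y \in \dom A$. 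More carefully, one applies Fact~\ref{slope} not to $f$ directly but after translating/restricting so that the relevant $y$'s land in $\dom A$; alternatively, one observes that the points $y$ can be chosen arbitrarily close to $z$ along directions into $\inte\dom\partial f$ from $x_0$, and by the linear-subspace structure of $\dom A$ together with $z, x_0 \in \dom A$, such $y$ lie in $\dom A$.

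Once $y \in \dom A$, pick any $a^* \in Ay$ (nonempty since $y \in \dom A$), so $a^* + y^* \in (A+\partial f)y$, and the monotone relationship of $(z,z^*)$ with $\gra(A+\partial f)$ gives
\begin{equation}
\langle y - z, a^* + y^* - z^*\rangle \ge 0,
\end{equation}
i.e.\ $\langle y-z, y^*\rangle \ge \langle y - z, z^* - a^*\rangle$. Using monotonicity of the linear relation $A$ (via Fact~\ref{f:referee}, or directly) one bounds $\langle y - z, a^*\rangle$ from below in terms of a fixed element of $Az$ and controls it by $C\|y-z\|$ for a constant $C$ independent of the choice; similarly $\langle y-z, z^*\rangle$ is $O(\|y-z\|)$. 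Combining,
\begin{equation}
-(1-\varepsilon)K\|y-z\| \ge \langle y-z, y^*\rangle \ge -C'\|y-z\|
\end{equation}
for a fixed finite $C'$, so $(1-\varepsilon)K \le C'$ for every $\varepsilon \in \zeroun$, forcing $K \le C'$ — and here I reach a contradiction by having chosen $\lambda$ (hence $K$, via its monotone dependence on $\lambda$) large enough, or by letting $\lambda \uparrow f(z)$ and noting $K \to +\infty$; if $f(z) = +\infty$ this is immediate, and if $f(z) < +\infty$ one uses that $K = K(\lambda) \to +\infty$ as $\lambda \uparrow f(z)$ is impossible to reconcile with the fixed bound $C'$ unless in fact $z \in \dom\partial f$. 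Either way the contradiction establishes $z \in \dom\partial f$. The main obstacle, as flagged, is the second step: rigorously guaranteeing that the slope-lemma points $(y,y^*)$ can be taken with $y \in \dom A$, which is exactly where the hypothesis $\dom A \cap \inte\dom\partial f \ne \varnothing$ and the linearity of $\gra A$ are indispensable and where the borrowed structure of \cite[Theorem~53.1]{Si2} must be adapted with care.
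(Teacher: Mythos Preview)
Your overall strategy---slope lemma, monotone relatedness, bound $K$---matches the paper's, and you correctly flag the crux: forcing the slope-lemma point $y$ into $\dom A$. But the execution has two genuine gaps.

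\textbf{First, you apply Fact~\ref{slope} to $f$ itself.} That gives $(y,y^*)\in\gra\partial f$ with \emph{no} control on where $y$ sits; your ``alternative'' that $y$ can be chosen close to $z$ along the direction of $x_0$ is simply not part of the slope lemma's conclusion. The paper avoids this by applying the slope lemma to $H(x)=h(x+z)-\scal{z^*}{x}$ where $h=f+\iota_D$ and $D=[c_0,z]$. Then any $(x,x^*)\in\gra\partial h$ automatically has $x\in D\subseteq\dom A$ (since $c_0,z\in\dom A$ and $\dom A$ is a subspace). You hint at this (``after translating/restricting'') but never commit to it, and without it the argument does not close. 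Note also that once you restrict to $h$, you must split $x^*=x_1^*+x_2^*$ with $x_1^*\in\partial f(x)$, $x_2^*\in\partial\iota_D(x)$ via Fact~\ref{f:F4}, and discard the normal-cone piece using $\scal{x-z}{x_2^*}\geq 0$; this step is absent from your sketch.

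\textbf{Second, your bound on $\scal{y-z}{a^*}$ points the wrong way.} You invoke monotonicity of $A$ to bound $\scal{y-z}{a^*}$ \emph{from below} by $-\|w^*\|\,\|y-z\|$ for fixed $w^*\in Az$. But the chain
\[
-(1-\varepsilon)K\|y-z\|\;\geq\;\scal{y-z}{y^*}\;\geq\;\scal{y-z}{z^*}-\scal{y-z}{a^*}
\]
requires an \emph{upper} bound on $\scal{y-z}{a^*}$ to cap $K$. Monotonicity of $A$ does not supply one. The paper instead uses linearity of $\gra A$: with $x=tc_0+(1-t)z$ on the segment, the specific choice $a^*=tc_0^*+(1-t)w^*\in Ax$ satisfies $\|a^*\|\leq M:=\max\{\|c_0^*\|,\|w^*\|\}$, whence $\scal{x-z}{a^*}\leq M\|x-z\|$. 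This is exactly why the segment restriction is indispensable, not optional.

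Your endgame (bounded $K_\lambda$ for all $\lambda$ forces $z\in\dom\partial f$) is correct and is the contrapositive of the paper's direct conclusion that $H(\cdot)+M\|\cdot\|\geq H(0)$, hence $0\in\dom\partial H$. So the architecture is right; what is missing is the commitment to work with $f+\iota_{[c_0,z]}$ from the start and to exploit linearity of $\gra A$ (not its monotonicity) for the bound on $a^*$.
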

\begin{proof}
Let $c_0\in X$ and $y^*\in X^*$ be such that
\begin{align}c_0\in \dom A \cap \inte \dom \partial f
\quad\text{and}\quad(z,y^*)\in\gra A.\label{ree:e1}\end{align}
Take $c_0^*\in Ac_0$, and set
\begin{align}
\label{ree:8}
M:=\max\big\{\|y^*\|,\|c^*_0\|\big\},
\end{align}
$D:=\left[c_0,z\right]$, and $h:=f+\iota_D$.
By \eqref{ree:e1}, Fact~\ref{pheps:1} and Fact~\ref{f:F4}, $\partial h=\partial f+\partial\iota_D$.
Set $H\colon X\to\RX\colon x \mapsto h(x+z)-\scal{z^*}{x}$.
It remains to show that
\begin{align}
0\in\dom\partial H.\label{ree:6}\end{align}
If $\inf H=H(0)$, then \eqref{ree:6} holds.
Now suppose that $\inf H<H(0)$.
Let $\lambda\in\RR$ be such that $\inf H<\lambda<H(0)$, and set
\begin{align}K_{\lambda}:=\sup_{H(x)<\lambda}\frac{\lambda-H(x)}{\|x\|}.\label{ree:18}\end{align}
We claim that
\begin{align*}
K_{\lambda}\leq M.
\end{align*} By Fact~\ref{slope},
we have $K_{\lambda}\in\left]0,\infty\right[$ and $\forall\varepsilon\in\left]0,1\right[$, by $\gra \partial H=\gra \partial h-(z,z^*)$  there
exists $(x,x^*)\in \gra \partial h$ such that
\begin{align}
\langle x-z,x^*-z^*\rangle\leq-(1-\varepsilon)K_{\lambda}\|x-z\|<0.\label{ree:7}\end{align}
Since $\partial h=\partial f+\partial\iota_D$, there exists $t\in\left[0,1\right]$ with
$x^*_1\in \partial f(x)$ and $x^*_2\in \partial\iota_D(x)$ such that $x=tc_0+(1-t)z$ and
$x^*=x^*_1+x^*_2$. Then $\langle x-z,x^*_2\rangle\geq0$.
Thus, by \eqref{ree:7},
\begin{align}
\langle x-z,x^*_1-z^*\rangle\leq\langle x-z,x^*_1+x^*_2-z^*\rangle\leq-(1-\varepsilon)K_{\lambda}\|x-z\|<0.\label{ree:10}\end{align}
As $x=tc_0+(1-t)z$ and $A$ is a linear relation, we have $(x,tc^*_0+(1-t)y^*)\in \gra A$.
 Since $(z,z^*)$ is monotonically related to $\gra (A+\partial f)$, by \eqref{ree:8},
\begin{align}
\langle x-z,x^*_1-z^*\rangle\geq-\langle x-z, tc^*_0+(1-t)y^*\rangle
\geq-M \|x-z\|.\label{ree:9}\end{align}
Combining \eqref{ree:9} and \eqref{ree:10}, we obtain
\begin{align}
-M \|x-z\|\leq-(1-\varepsilon)K_{\lambda}\|x-z\|<0.\label{ree:11}\end{align}
Hence, $(1-\varepsilon)K_{\lambda}\leq M$.
Letting $\varepsilon\downarrow0$,
we deduce that $K_{\lambda}\leq M$.
Then, by \eqref{ree:18} and letting $\lambda\uparrow H(0)$,
we get
\begin{align}
H(y)+M\|y\|\geq H(0),\quad \forall y\in X.\end{align}
By \cite[Example~7.1]{Si2}, $0\in \dom \partial H$.
Hence \eqref{ree:6} holds and thus $z\in\dom\partial f$.
\end{proof}

\section{Main Result}
\label{s:main}

\begin{theorem}\label{t:main}
Let $A:X\To X^*$ be a maximal monotone linear relation,
let $f:X\rightarrow\RX$ be a proper lower semicontinuous sublinear function,
and suppose that $\dom A \cap \inte \dom \partial f\neq \varnothing$.
Then $A+\partial f$ is maximal monotone.
\end{theorem}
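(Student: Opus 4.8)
The plan is to apply Fact~\ref{f:referee1} to $A$ and $B:=\partial f$; note $\partial f$ is maximal monotone since $f$ is proper, lower semicontinuous and convex. This reduces the theorem to two claims: (I) the set $\bigcup_{\lambda>0}\lambda\big[P_X(\dom F_A)-P_X(\dom F_B)\big]$ is a closed subspace; and (II) every $(z,z^*)\in X\times X^*$ monotonically related to $\gra(A+B)$ satisfies $z\in\dom A\cap\dom B$. Claim (I) is immediate from the constraint qualification: fixing $c_0\in\dom A\cap\inte\dom\partial f$, we have $c_0\in\dom A\subseteq P_X(\dom F_A)$ because $F_A(a,a^*)=\scal{a}{a^*}<\infty$ for every $(a,a^*)\in\gra A$ (Fact~\ref{f:Fitz}), and by Fact~\ref{pheps:1} there is $\delta>0$ with $c_0+\delta B_X\subseteq\inte\dom f\subseteq\dom\partial f\subseteq P_X(\dom F_B)$; hence $P_X(\dom F_A)-P_X(\dom F_B)\supseteq-\delta B_X$ and the set in (I) is all of $X$.

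So everything is in (II). Let $(z,z^*)$ be monotonically related to $\gra(A+\partial f)$. Since $\gra A$ is a linear subspace, $0\in\dom A$ and $A0$ is a subspace of $X^*$; and $\partial f(0)\neq\varnothing$ by Fact~\ref{zls:1}\ref{zf:3}, so $0\in\dom\partial f$. Testing monotone relatedness against $(0,a_0^*+c^*)\in\gra(A+\partial f)$ for $a_0^*\in A0$, $c^*\in\partial f(0)$ gives $\scal{z}{z^*}-\scal{z}{a_0^*}-\scal{z}{c^*}\ge0$; letting $a_0^*$ range over the subspace $A0$ forces $\scal{z}{a_0^*}=0$ for all $a_0^*\in A0$, i.e.\ $z\in(A0)^{\bot}$, and Lemma~\ref{rcf:001} then gives $z\in\overline{\dom A}$. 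Taking $a_0^*=0$ and using Fact~\ref{zls:1}\ref{zf:4} also yields $\scal{z}{z^*}\ge\sup\scal{z}{\partial f(0)}=f(z)$, so $z\in\dom f$.

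What remains — and this is the crux — is to promote $z\in\overline{\dom A}$ to $z\in\dom A$; once this is in hand, Lemma~\ref{rcf:01} applies and produces $z\in\dom\partial f$, which completes (II) and the theorem. This is where Rockafellar's conjugate-of-a-sum formula and the Fitzpatrick function of $A$ are used. A natural route is to study the infimal convolution $m:=\inf_{w^*\in X^*}\big[F_A(z,w^*)+f^*(z^*-w^*)\big]$: since $F_A(z,w^*)\ge\scal{z}{w^*}$ for every $w^*$ (Fact~\ref{f:Fitz}) and $f(z)+f^*(z^*-w^*)\ge\scal{z}{z^*-w^*}$ (Fenchel--Young), one always has $m\ge\scal{z}{z^*}-f(z)$; if one can show $m\le\scal{z}{z^*}-f(z)$ with the infimum attained at some $u^*$, then both displayed inequalities become equalities at $u^*$, whence $(z,u^*)\in\gra A$ and $z\in\dom A$ (and moreover $z^*-u^*\in\partial f(z)$). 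The exactness of this infimal convolution is obtained from Fact~\ref{f:F4} (the required constraint qualification being furnished by $c_0\in\dom A\cap\inte\dom\partial f$, after rewriting the infimal convolution dually), and the bound $m\le\scal{z}{z^*}-f(z)$ is where the monotone relatedness of $(z,z^*)$ to $\gra(A+\partial f)$, together with $z\in\overline{\dom A}$, is brought to bear.

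I expect the hard part to be exactly that last step — turning $z\in\overline{\dom A}$ into $z\in\dom A$, equivalently establishing the value and attainment of the infimal convolution above — since this is where the ``slope'' of the linear relation $A$ at $z$ could a priori blow up and must be controlled using the fine properties of $F_A$ and the continuity built into the constraint qualification. The reduction via Fact~\ref{f:referee1}, the subspace condition, and the passage to $\overline{\dom A}$ are all routine by comparison.
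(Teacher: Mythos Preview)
Your overall strategy coincides with the paper's: reduce via Fact~\ref{f:referee1}, verify the subspace condition, obtain $z\in(A0)^\bot$ hence $z\in\overline{\dom A}$ via Lemma~\ref{rcf:001}, get $z\in\dom f$, then upgrade to $z\in\dom A$ using the Fitzpatrick function together with Rockafellar's conjugate formula, and finish with Lemma~\ref{rcf:01}. The preliminary steps are correct and match the paper almost verbatim.

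The gap is precisely where you locate it, but your sketch for closing it is too vague to count as a proof and does not quite line up with what the paper does. Your proposed object $m=\inf_{w^*}\big[F_A(z,w^*)+f^*(z^*-w^*)\big]$ satisfies $m\ge\scal{z}{z^*}-f(z)$ automatically, and equality with attainment is \emph{equivalent} to the conclusion $(z,u^*)\in\gra A$, $z^*-u^*\in\partial f(z)$; so the sentence ``the bound $m\le\scal{z}{z^*}-f(z)$ is where the monotone relatedness \dots\ is brought to bear'' is not yet an argument. Applying Fact~\ref{f:F4} ``after rewriting the infimal convolution dually'' is also unclear: Fact~\ref{f:F4} gives exactness of $(\phi+\psi)^*$, not a direct bound on an infimal convolution of two given functions, and passing to the bidual here is delicate.

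The paper instead works on $X\times X^*$ with $g(x,x^*)=\scal{x}{x^*}+\iota_{\gra A}(x,x^*)$ (convex by Fact~\ref{f:referee}) and $j(x,x^*)=f(x-z)+\iota_{C}(x)$, $C=\inte\dom f$, and sets $h=g+j$. Two points are essential and absent from your sketch. First, the constraint qualification for Fact~\ref{f:F4} requires a point in $\dom g\cap\inte\dom j$, i.e.\ a point of $\gra A$ whose first coordinate lies in $\inte\dom f\cap\inte\dom f(\cdot-z)$; this is manufactured from $z\in\overline{\dom A}$ and Lemma~\ref{rcf:1} (which gives $z+c_0\in\inte\dom f$ since $z\in\dom f$), yielding $z+c_0+b\in\dom A\cap\inte\dom f$ with $b$ small. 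Second, sublinearity is used \emph{again}, not only through $f^*=\iota_{\partial f(0)}$: one needs $\partial f(x)\subseteq\partial f(0)$ (Fact~\ref{zls:1}\ref{zf:2}) to pass from monotone relatedness to the inequality $h^*(z^*,z)\le\scal{z}{z^*}$, and one needs the positive homogeneity of $f$ and of $C$ (together with $z+C\subseteq C$ from Lemma~\ref{rcf:1}) to show that the $j^*$-contribution in the decomposition $h^*(z^*,z)=g^*(y^*,y^{**})+j^*(z^*-y^*,z-y^{**})$ is bounded below by $\iota_{\{0\}}(z-y^{**})+\scal{z}{z^*-y^*}$. These two inequalities combine to give $F_A(z,y^*)=g^*(y^*,z)\le\scal{z}{y^*}$, hence $(z,y^*)\in\gra A$ by Fact~\ref{f:Fitz}. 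Until you supply this two-variable construction (or an equivalent argument) and the two uses of sublinearity, the proof is incomplete.
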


\begin{proof}
Let $(z,z^*)\in X\times X^*$ and suppose that
\begin{equation}
\text{
$(z,z^*)$ is monotonically related to $\gra(A+\partial f)$.
}
\end{equation}
By Fact~\ref{f:Fitz}, $\dom A\subseteq P_X(F_A)$ and $\dom \partial f\subseteq P_X(F_{\partial f})$.
Hence,
\begin{align}\bigcup_{\lambda>0} \lambda
\big(P_X(\dom F_A)-P_X(\dom F_{\partial f})\big)=X.\end{align}
Thus, by Fact~\ref{f:referee1}, it suffices to show that
\begin{equation} \label{e:ourgoal}
z\in\dom A\cap\dom \partial f.
\end{equation}
We have
\begin{align}
&\langle z, z^*\rangle-\langle z,x^*\rangle-\langle x,z^*\rangle
+\langle x,x^*\rangle+\langle x-z,y^*\rangle\nonumber\\
&=\langle z-x, z^*-x^*-y^*\rangle\geq0, \quad \forall
(x,x^*)\in\gra A, (x,y^*)\in\gra \partial f.\label{ree:1}
\end{align}
By Fact~\ref{zls:1}\ref{zf:3}, $\partial f(0)\neq\varnothing$.
By \eqref{ree:1},
\begin{align*}
\inf \left[\langle z, z^*\rangle-\langle z,A0\rangle
-\langle z,\partial f(0)\rangle\right]\geq 0.
\end{align*}
Thus, \begin{align}z\in X\cap (A0)^\bot.\label{ree:2}\end{align}
Then, by Fact~\ref{zls:1}\ref{zf:4},
\begin{align*}
\langle z, z^*\rangle\geq f(z).
\end{align*}
Thus, \begin{align}z\in\dom f.\label{ree:15}\end{align}
By \eqref{ree:2} and Lemma~\ref{rcf:001}, we have
\begin{align}z\in\overline{\dom A}.\label{ree:5}\end{align}
By Fact~\ref{zls:1}\ref{zf:2}, $y^*\in \partial f(0)$ as  $y^*\in \partial f(x)$.
Then $\langle x-z,y^*\rangle\leq f(x-z),\quad \forall y^*\in \partial f(x)$.
Thus, by \eqref{ree:1}, we have
\begin{align}
&\langle z, z^*\rangle-\langle z,x^*\rangle-\langle x,z^*\rangle
+\langle x,x^*\rangle+f(x-z)\geq0,
 \quad \forall
(x,x^*)\in\gra A, x\in\dom \partial f.
\end{align}
Let $C:=\intdom f$.
Then by Fact~\ref{pheps:1}, we have
\begin{align}
&\langle z, z^*\rangle-\langle z,x^*\rangle-\langle x,z^*\rangle
+\langle x,x^*\rangle+f(x-z)\geq0,
 \quad \forall
(x,x^*)\in\gra A, x\in C.\label{e:zz*}
\end{align}
Set  $j:=(f(\cdot-z)+\iota_{C}) \oplus\iota_{X^*}$ and
\begin{equation}\label{e:defofg}
g\colon X\times X^* \to \RX\colon
(x,x^*)\mapsto \scal{x}{x^*} + \iota_{\gra A}(x,x^*).
\end{equation}
By Fact~\ref{f:referee}, $g$ is convex.
Hence,
\begin{equation} \label{e:defofh}
h:= g + j
\end{equation}
is convex as well.
Let
\begin{equation} \label{e:defofc0}
c_0 \in \dom A \cap C.
\end{equation}
By Lemma~\ref{rcf:1} and \eqref{ree:15}, $z+c_0\in \intdom f$. Then there exists $\delta>0$ such that $z+c_0+ \delta B_X\subseteq\dom f$
and $c_0+ \delta B_X\subseteq\dom f$.
By \eqref{ree:5}, $z+c_0\in\overline{\dom A}$ since $\dom A$ is a linear subspace. Thus there exists $b\in \tfrac{1}{2}\delta B_X$ such that
$z+c_0+b\in\dom A\cap \inte\dom f$. Let $v^*\in A(z+c_0+b)$. Since $c_0+b\in\inte\dom f$,
\begin{align}(z+c_0+b,v^*)\in \gra A \cap
\big(\inte C\cap\inte \dom f(\cdot-z) \times X^*\big)
=\dom g\cap \inte\dom j\neq\varnothing.\end{align}
By Fact~\ref{f:F4} and Fact~\ref{pheps:1},
there exists $(y^*,y^{**})\in X^{*}\times X^{**}$ such that
\begin{align}
h^*(z^*,z) &= g^*(y^*,y^{**}) + j^*(z^*-y^*,z-y^{**})\nonumber\\
&=g^*(y^*,y^{**})+ \iota_{\{0\}}(z-y^{**})
+ \sup_{x\in C}\left[\langle x,  z^*-y^*\rangle-f(x-z)\right]\nonumber\\
&\geq g^*(y^*,y^{**})+ \iota_{\{0\}}(z-y^{**})
+ \sup_{x\in z+ C}\left[\langle x,  z^*-y^*\rangle-f(x-z)\right]\,\text{(by Lemma~\ref{rcf:1} and \eqref{ree:15})}\nonumber\\
&= g^*(y^*,y^{**})+ \iota_{\{0\}}(z-y^{**})+\langle z,z^*-y^*\rangle
+ \sup_{y\in C}\left[\langle y,  z^*-y^*\rangle-f(y)\right]\nonumber\\
&= g^*(y^*,y^{**})+ \iota_{\{0\}}(z-y^{**})+\langle z,z^*-y^*\rangle
+ \sup_{\{y\in C, k>0\}}\left[\langle ky,  z^*-y^*\rangle-f(ky)\right]\nonumber\\
&= g^*(y^*,y^{**})+ \iota_{\{0\}}(z-y^{**})+\langle z,z^*-y^*\rangle
+\sup_{\{y\in C,k>0\}}k\left[\langle y,  z^*-y^*\rangle-f(y)\right]\nonumber\\
&\geq g^*(y^*,y^{**})+ \iota_{\{0\}}(z-y^{**})+\langle z,z^*-y^*\rangle.\label{e:puncha}
\end{align}
By \eqref{e:zz*}, we have,
for every $(x,x^*)\in\gra A \cap (C\times X^*)$,
$\scal{(x,x^*)}{(z^*,z)} - h(x,x^*) =
\scal{x}{z^*}+\scal{z}{x^*}-\scal{x}{x^*}-f(x-z) \leq \scal{z}{z^*}$.
Consequently,
\begin{equation} \label{e:punchb}
h^*(z^*,z) \leq \scal{z}{z^*}.
\end{equation}
Combining \eqref{e:puncha} with \eqref{e:punchb}, we obtain
\begin{equation} \label{e:punchc}
g^*(y^*,y^{**})+\langle z,z^*-y^*\rangle  + \iota_{\{0\}}(z-y^{**}) \leq
\scal{z}{z^*}.
\end{equation}
Therefore, $y^{**}=z$. Hence
$g^*(y^*,z) + \langle z,z^*-y^*\rangle \leq \scal{z}{z^*}$.
Since $g^*(y^*,z) = F_A(z,y^*)$, we deduce that
$F_A(z,y^*)\leq \scal{z}{y^*}$.
By Fact~\ref{f:Fitz},
\begin{equation}
\label{e:punchx}
(z,y^*)\in\gra A
\end{equation}
Hence \begin{align*} z\in\dom A.\end{align*}
Apply  Lemma~\ref{rcf:01} to obtain $z\in\dom\partial f$.
Then $z\in\dom A\cap\dom\partial f$.
Hence $A+B$ is maximal monotone.
\end{proof}

\begin{remark}
Verona and Verona 
(see \cite[Corollary~2.9(a)]{VV} or \cite[Theorem~53.1]{Si2}) 
showed the following: 
``Let $f: X\to \RX$ be proper, lower semicontinuous, and convex, 
let $A: X\To X^*$ be maximal monotone, and suppose that
dom $A=X$. 
Then $\partial f +A$ is maximal monotone.''
Note that 
Theorem~\ref{t:main} cannot be deduced from this result 
because $\dom A$ need not have full domain.
\end{remark}

\section*{Acknowledgment}
Heinz Bauschke was partially supported by the Natural Sciences and
Engineering Research Council of Canada and
by the Canada Research Chair Program.
Xianfu Wang was partially supported by the Natural
Sciences and Engineering Research Council of Canada.

\small

\end{document}